\documentclass[11pt,a4paper]{article}
\usepackage[english]{babel}
\usepackage[utf8]{inputenc}
\usepackage[T1]{fontenc}
\usepackage{amsmath, amssymb, amsthm}
\usepackage{graphicx}
\usepackage{hyperref}
\usepackage{geometry}
\usepackage{float}
\usepackage[english]{datetime2}
\usepackage{authblk} 
\usepackage{booktabs}
\usepackage{subcaption} 
\usepackage{textcomp}

\DeclareUnicodeCharacter{02BF}{\textquoteleft} 
\newtheorem{proposition}{Proposition}
\title{The Angular Seed Power Map: A Constructive Approach to Recursive Scaling Spirals}

\author{Arjen T. Dijksman, PhD\thanks{Independent Researcher. Former affiliation: ESPCI Paris. Contact: materion@gmail.com}}
\DTMsavedate{origdate}{2026-06-23} 
\date{\DTMusedate{origdate}}

\begin{document}

\maketitle

\begin{abstract}
We present the ``Power Spiral Map'', a continuous angular evolution of the linear coordinate grid established in our previous work. While that previous Power Map utilized a seed value translating along a horizontal axis, this work builds upon a seed angle ($\theta$) projected onto a unit diameter circle. This operation controls two coupled geometric behaviors: an internal area-preserving partition of unity within a reference square ($\cos^2\theta + \sin^2\theta = 1$) and an external recursive scaling mechanism ($\sec\theta$ and $\cos\theta$) that dictates the expansion or contraction of successive generations of squares unfolding as a spiral in the 2D plane. We demonstrate that continuous variation of this angular parameter generates discrete geometric alignments that yield polynomial identities, with examples of the Golden Ratio ($\phi$) and the Plastic Ratio ($\psi$) defined through purely planar intersections.
\end{abstract}

\section{Introduction}
In our previous paper, \textit{The Power Map: A Unified Geometric Locus for Rational Exponents}, we formalized a planar coordinate framework where higher-order algebraic power curves were mapped as a continuous field of linear coordinate intersections $(s^p, s^q)$~\cite{dijksman2026powermap}. This structure depended on a linear seed value $s$ sliding along the horizontal axis, projecting rays through a fixed unit line at $x=1$ to construct a parametric lattice.

This paper builds directly upon that dimensional-homogeneity framework by shifting from a linear translation to a continuous polar rotation. Instead of a sliding scalar seed $s$, we introduce a continuous \textit{angular parameter} $\theta$ operating within a fixed unit diameter circle.

By allowing the boundaries of our foundational geometry to vary continuously along orthogonal constraints, the static coordinate grid unfolds into a family of recursive geometric spirals. This transition from a linear to a rotational seed uncovers an intrinsic link between localized trigonometric partitions of unity and global geometric progressions, establishing a continuous planar framework where specific intersection configurations correspond directly to the roots of non-linear polynomial systems.

\section{The Unit Diameter Circle and its Associated Square}

The foundation of the Angular Seed Power Map is a unit diameter circle and its associated unit square. We present this core geometry into three progressive stages, where a singular angular parameter $\theta$ translates local trigonometric partitions into a recursive scaling spiral.

\subsection{The Classical Pythagorean Partition and the Square Root Scale}

\textbf{Construction.} 
The configuration begins with a unit diameter circle $C_u$ anchored at $O(0,0)$ and $I(1,0)$. Atop this diameter, we construct the unit reference square $OIAB$ with top vertices $A(1,1)$ and $B(0,1)$, serving as the fundamental unit reference from which the entire construction is derived. We introduce a sliding point $S$ on the circle. As illustrated in Figure~\ref{fig:unit_cell_a}, $S$ defines a ray projected from the origin $O$ at a seed angle $\theta$. By Thales’ Theorem, the inscribed angle $\angle OSI$ is rigidly constrained to a right angle ($\angle OSI = \frac{\pi}{2}$). 

An orthogonal projection dropped from $S$ to the base diameter $OI$ partitions the unit length into two linear segments: a left segment $OC$ and a right segment $CI$. Given that the height of the reference square is $1$, a vertical line dropped through $C$ hits the upper edge $BA$ at point $D$. This line divides the unit square $OIAB$ into two distinct rectangles, $OCDB$ and $CIAD$. For geometric comparison, we erect standard squares $OSJK$ and $SIPQ$ directly upon the two legs of the right triangle $\triangle OSI$.

\begin{figure}[t!]
    \centering
    \begin{subfigure}{0.43\textwidth}
        \centering
        \includegraphics[width=\textwidth]{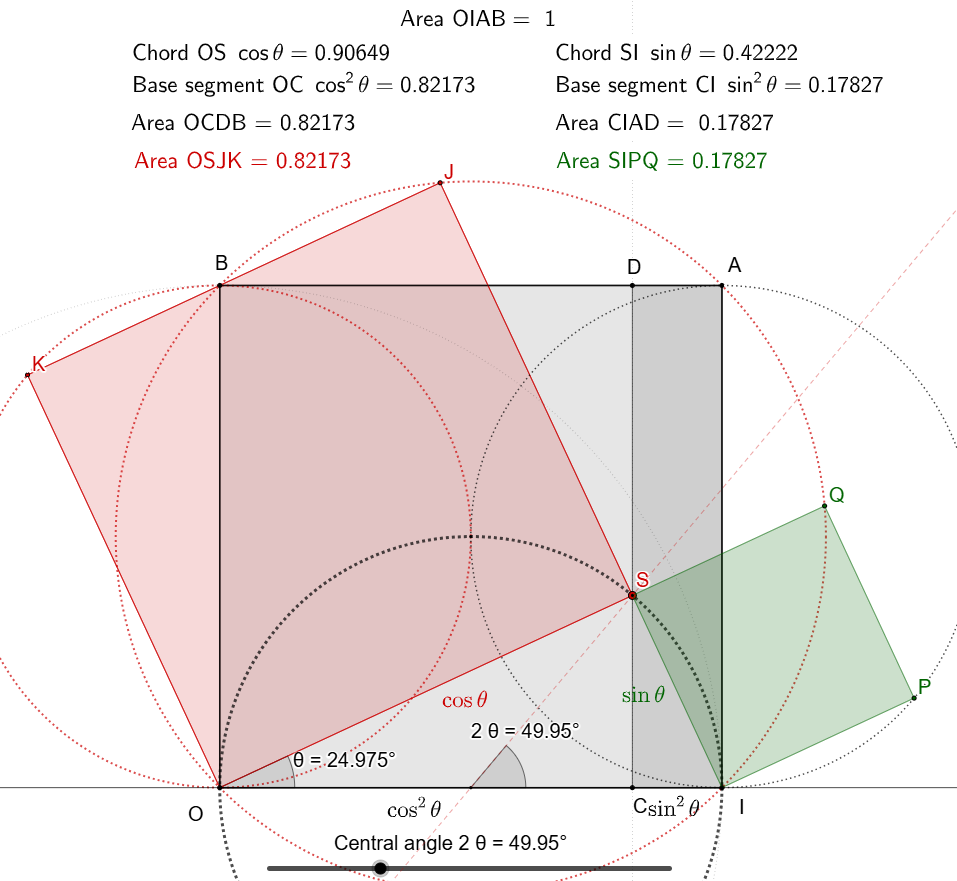}
        \caption{The Pythagorean partition.}
        \label{fig:unit_cell_a}
    \end{subfigure}
    \hfill
    \begin{subfigure}{0.52\textwidth}
        \centering
        \includegraphics[width=\textwidth]{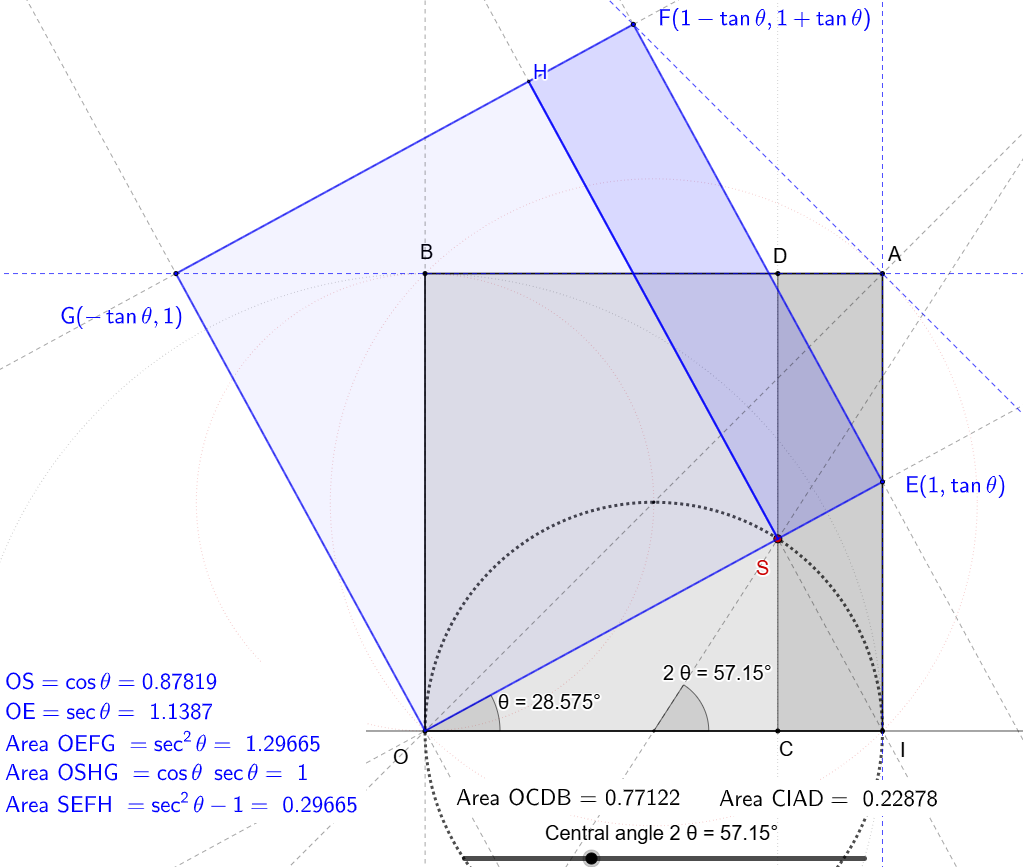}
        \caption{The expanding system framework.}
        \label{fig:unit_cell_b}
    \end{subfigure}
    
    \vspace{8pt} 
    
    \begin{subfigure}{0.75\textwidth} 
        \centering
        \includegraphics[width=\textwidth]{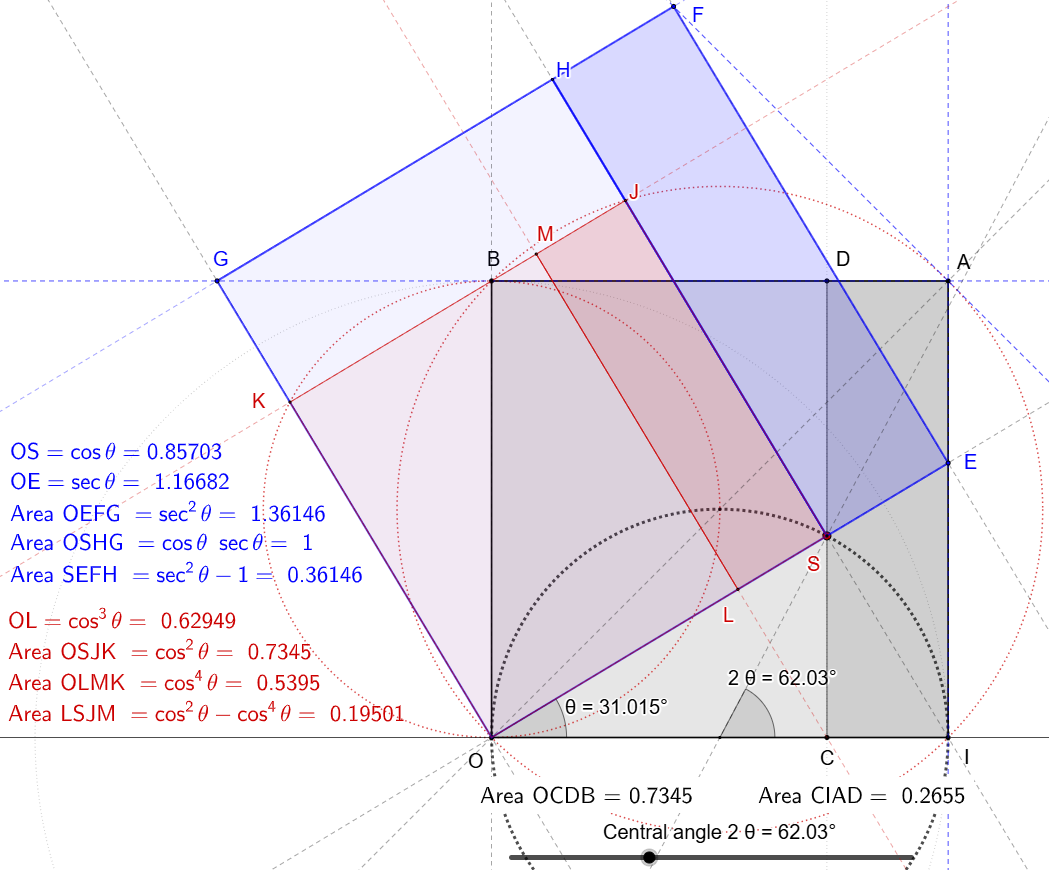}
        \caption{The complete superimposed system showing joint invariants.}
        \label{fig:unit_cell_c}
    \end{subfigure}

\caption{Step-by-step geometric construction of the foundational unit framework: 
    (a) the orthogonal partition of the unit reference square and its corresponding chord segments; 
    (b) the expanding configuration demonstrating the preservation of unit area via the cutting track $ISH$; 
    (c) the complete configuration demonstrating the reciprocal joint invariants. 
    Presented seed angles are randomly chosen. Interactive, live-variable versions of these constructions are available on GeoGebra, allowing the reader to continuously modify the angular parameter $\theta$: 
    see \href{https://www.geogebra.org/m/ymx8kdbp}{Figure (a)}, 
    \href{https://www.geogebra.org/m/urpagrem}{Figure (b)}, and 
    \href{https://www.geogebra.org/m/e7tc7aex}{Figure (c)}.}
    \label{fig:complete_unit_cell}
\end{figure}

\textbf{Exploration.} 
By continuously varying the parameter $\theta$ within an interactive dynamic geometry environment, the shifting configurations manifest clear geometric trajectories. As the point $S$ moves along its circular arc on $C_u$, the vertices $K$ and $P$ of the chord squares trace circular paths of diameter $OB$ and $IA$, respectively, while the vertices $J$ and $Q$ move along the circle of diameter $OA$. This configuration functions as a geometric root protractor, transforming the quadratic root segments of the diameter into a linear progression along the perimeter of the circle. This spatial arrangement reveals that both the external chord squares and the internal split rectangles share identical area-preservation behaviors.

\begin{proposition}[Pythagorean Partition Law]
For any seed angle $\theta$, the segments partitioning the base diameter satisfy $OC = \cos^2\theta$ and $CI = \sin^2\theta$. Consequently, the chord lengths are the geometric square roots of the segments partitioning the diameter ($OS = \cos\theta$ and $SI = \sin\theta$), and the internal split rectangles share identical area values with their corresponding external chord squares:
\begin{equation}
\text{Area}(OCDB) = \text{Area}(OSJK) = \cos^2\theta
\end{equation}
\begin{equation}
\text{Area}(CIAD) = \text{Area}(SIPQ) = \sin^2\theta
\end{equation}
\end{proposition}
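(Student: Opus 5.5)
The plan is to work entirely in the right triangle $\triangle OSI$ supplied by Thales' Theorem and to obtain every quantity in the statement from two elementary facts: the definition of cosine in that triangle, and the rectangle and square area formulas. The seed angle $\theta$ is the angle $\angle SOI$ between the ray $OS$ and the diameter $OI$, with $0<\theta<\frac{\pi}{2}$ so that $S$ lies on the upper arc. The degenerate endpoints $\theta=0$ and $\theta=\frac{\pi}{2}$, where $S$ coincides with $I$ or $O$, can be checked separately, or obtained by continuity.

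First I would compute the chord lengths. Since $\angle OSI=\frac{\pi}{2}$ and the hypotenuse is $OI=1$, the definitions of cosine and sine at vertex $O$ give $OS=OI\cos\theta=\cos\theta$ and $SI=OI\sin\theta=\sin\theta$.

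Second, I would locate the foot $C$ of the perpendicular from $S$. In the right triangle $\triangle OCS$ (right angle at $C$), the angle at $O$ is still $\theta$. Hence
\[
OC = OS\cos\theta=\cos^2\theta \quad\text{and}\quad CI = OI-OC = 1-\cos^2\theta=\sin^2\theta .
\]
Equivalently, this is Euclid's leg theorem $OS^2=OC\cdot OI$. As an alternative check, put $S=(\cos^2\theta,\cos\theta\sin\theta)$ in coordinates. This point satisfies $(x-\tfrac12)^2+y^2=\tfrac14$, and its projection onto the $x$-axis is $C=(\cos^2\theta,0)$. This also confirms that $OS=\cos\theta$ and $SI=\sin\theta$ are the square roots of $OC$ and $CI$.

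Third come the areas. The segment $CD$ is vertical and spans the full height $1$ of the square, so $OCDB$ is a rectangle of width $OC$ and height $1$, with area $\cos^2\theta$. Likewise $CIAD$ has area $CI\cdot 1=\sin^2\theta$. The squares $OSJK$ and $SIPQ$ are erected on sides of lengths $\cos\theta$ and $\sin\theta$, so their areas are $\cos^2\theta$ and $\sin^2\theta$. Equating gives the two displayed identities, and their sum recovers $\cos^2\theta+\sin^2\theta=1$, the area of $OIAB$.

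The only step requiring care is the second one, identifying $OC$. Everything hinges on recognising that the angle at $O$ in the small triangle $\triangle OCS$ equals the seed angle $\theta$, together with Thales' right angle at $S$. I would present the coordinate verification alongside the similar-triangles argument to make this airtight. I would also remark that the equality is one of area values only: the rectangles and the chord squares are not congruent.
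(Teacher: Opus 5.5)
Your proof is correct and takes essentially the paper's route. The paper gives no separate proof of this proposition and relies on what its construction sets up: Thales' right angle at $S$, the orthogonal projection of $S$ to $C$, and the similar-triangle scaling by $\cos\theta$ that it later uses for $OL = OC\cos\theta$. Your chain $OS=\cos\theta$, then $OC = OS\cos\theta = \cos^2\theta$ (equivalently $OS^2 = OC\cdot OI$), followed by the rectangle and square area formulas, fills in exactly that argument.
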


\subsection{The Expanding System and Proportional Replication}

\textbf{Construction.} 
The configuration established in the preparatory stage allows for the systematic replication of proportions using similar triangles. As shown in Figure~\ref{fig:unit_cell_b}, the seed ray $OS$ is extended outward beyond the unit circle until it intersects the vertical line $x=1$ at the vertex $E(1, \tan\theta)$. Using the segment $OE$ as a base, we construct an expanding square $OEFG$. 

We project a line through points $I$ and $S$, extending it outward until it intersects the upper boundary of the square $OEFG$ at point $H$. Because the inscribed Thales angle is $\angle OSI = \frac{\pi}{2}$, the segment $IS$ is orthogonal to the base ray $OE$, which forces the line $ISH$ to run parallel to the lateral edges $OG$ and $EF$. This linear cutting track $ISH$ partitions the expanding square $OEFG$ into a left rectangular slice $OSHG$ and a remaining right rectangle $SEFH$.

\textbf{Exploration.} 
As the parameter $\theta$ is varied across its continuous range, the moving vertices of the expanding square trace distinct, regular geometric trajectories. The lower-right vertex $E$ slides along the vertical boundary line $x=1$, while the upper-left vertex $G$ is constrained to move along the horizontal line $y=1$ containing the top edge of the reference square. Consequently, the outer vertex $F$ traces a steady linear path along the diagonal track $y=2-x$. Throughout this variation, the internal partition line $ISH$ continuously maintains a structural copy of the reference unit proportions inside the shifting outer square.

\begin{proposition}[Expanding Invariants]
For any seed angle $\theta$ in the interval $(0, \pi/2)$, the dimensions and partitions of the expanding square $OEFG$ satisfy the following geometric invariants:
\begin{enumerate}
    \item The total side length is $OE = \sec\theta$, yielding a total scaling area of:
    \begin{equation}
    \text{Area}(OEFG) = \sec^2\theta
    \end{equation}
    \item The vertex coordinates are analytically given by $E(1, \tan\theta)$, $G(-\tan\theta, 1)$, and $F(1-\tan\theta, 1+\tan\theta)$.
    \item The left rectangular partition $OSHG$ remains perfectly invariant in area against changes to the seed angle:
    \begin{equation}
    \text{Area}(OSHG) = OS \times OG = \cos\theta \times \sec\theta = 1
    \end{equation}
\end{enumerate}
\begin{proof}
By the properties of right triangle similarities ($\triangle OSI \sim \triangle OIE$), since the adjacent side length is $OI=1$, the hypotenuse is exactly $OE = \sec\theta$. Because $OEFG$ is a square, the perpendicular side is $OG = \sec\theta$ at an angle of $\theta + \pi/2$ with the $x$-axis, yielding coordinates $(-\sec\theta \sin\theta, \sec\theta \cos\theta) = (-\tan\theta, 1)$. The width of the rectangular partition $OSHG$ is defined by the chord segment $OS = \cos\theta$, and its height matches the side length $OG = \sec\theta$. Their product yields a constant area of $1$ for all valid $\theta$.
\end{proof}
\end{proposition}

\subsection{The Contracting System and Joint Invariants}

\textbf{Construction.} 
To complete the reciprocal framework, we reintroduce the internal contracting square $OSJK$ ($\text{Area} = \cos^2\theta$) established in Section 2.1 and superimpose it with the external expanding square $OEFG$. As shown in Figure~\ref{fig:unit_cell_c}, the internal partitioning of this contracting square is driven by a direct geometric duality to the expanding system. A cutting line $LM$ is projected through the horizontal projection point $C$ running parallel to the chord segment $SI$. Because the inscribed Thales angle is $\angle OSI = \frac{\pi}{2}$, the segment $SI$ is orthogonal to the base side $OS$. Consequently, the parallel line through $C$ intersects the interior of the square along the segment $LM$, which is strictly perpendicular to $OS$ and parallel to the lateral edges $OK$ and $SJ$, dividing the square into two distinct rectangles: $OLMK$ and $LSJM$.

\textbf{Exploration.} 
As the parameter $\theta$ varies continuously, the moving vertices of the expanding square slide smoothly along their respective linear tracks ($x=1$, $y=2-x$, and $y=1$), while the vertices $J$ and $K$ of the contracting square trace circular paths within their respective domains. Despite the expanding square scaling outward toward infinity and the contracting square scaling inward toward the origin, the local area interactions obey a strict global reciprocity, visually grounding the continuous framework to the unit reference square $OIAB$.

\begin{proposition}[Reciprocal Joint Invariants]
For any parameter $\theta$ in the interval $(0, \pi/2)$, the dimensions and partitions of the joint system satisfy the following geometric invariants:
\begin{enumerate}
    \item The cutting line $LM$ intercepts the base side at a length of $OL = \cos^3\theta$.
    \item The internal split rectangles yield exact higher-order trigonometric areas:
    \begin{equation}
    \text{Area}(OLMK) = \cos^4\theta
    \end{equation}
    \begin{equation}
    \text{Area}(LSJM) = \cos^2\theta\sin^2\theta
    \end{equation}
    \item The cross-multiplied product of the expanding and contracting squares remains perfectly conserved against the area of the unit reference square $OIAB$:
    \begin{equation}
    \text{Area}(OSJK) \times \text{Area}(OEFG) = \cos^2\theta \times \sec^2\theta = 1 = \text{Area}(OIAB)
    \end{equation}
\end{enumerate}
\begin{proof}
Because the line $LM$ is constructed parallel to chord $SI$ through point $C$, the right triangle $\triangle OLC$ is similar to the foundational triangle $\triangle OSI$. This yields the base scaling relation $OL = OC \cos\theta$. Substituting $OC = \cos^2\theta$ gives the exact intercept length $OL = \cos^3\theta$. Since the total height of the square is $OS = \cos\theta$, the area of the left rectangle is $\text{Area}(OLMK) = OL \times OS = \cos^3\theta \times \cos\theta = \cos^4\theta$. The complementary area follows naturally as $\cos^2\theta - \cos^4\theta = \cos^2\theta(1-\cos^2\theta) = \cos^2\theta\sin^2\theta$. Finally, because $\sec\theta = \frac{1}{\cos\theta}$, the global area product simplifies identically to $1$ for all valid $\theta$.
\end{proof}
\end{proposition}

\section{Global Unfolding: Connecting Rotation to the Linear Power Map}

\textbf{Continuous Unfolding.}
By recursively cascading the reciprocal scaling logic established in Section 2, the localized partitions unfold into a continuous, multi-generational planar spiral (Figure~\ref{fig:global_spiral}). Rather than treating expansion and contraction as isolated transformations, the geometric operations iterate outward and inward recursively from the origin $O$. The angular parameter $\theta$ functions as the singular metric driving this progression, causing successive generations of squares $S_n$ to scale exponentially across the 2D plane. 

Throughout this continuous variation, the entire system behaves as a coherent geometric network. As the multi-generation squares expand under continuous angular rotation, their moving vertical and horizontal edge lines continuously slice across the fixed, linear coordinate axes of the plane. By tracking the exact coordinates where these moving boundaries intercept the axes, a direct correspondence is explicitly formalized. This mapping links our continuous angular parameter $\theta$ directly back to the sliding linear seed $s$ established in our linear Power Map~\cite{dijksman2026powermap}, mapping a purely rotational mechanism onto a linear field of rational exponents.

\begin{figure}[t!]
    \centering
    \includegraphics[width=0.9\textwidth]{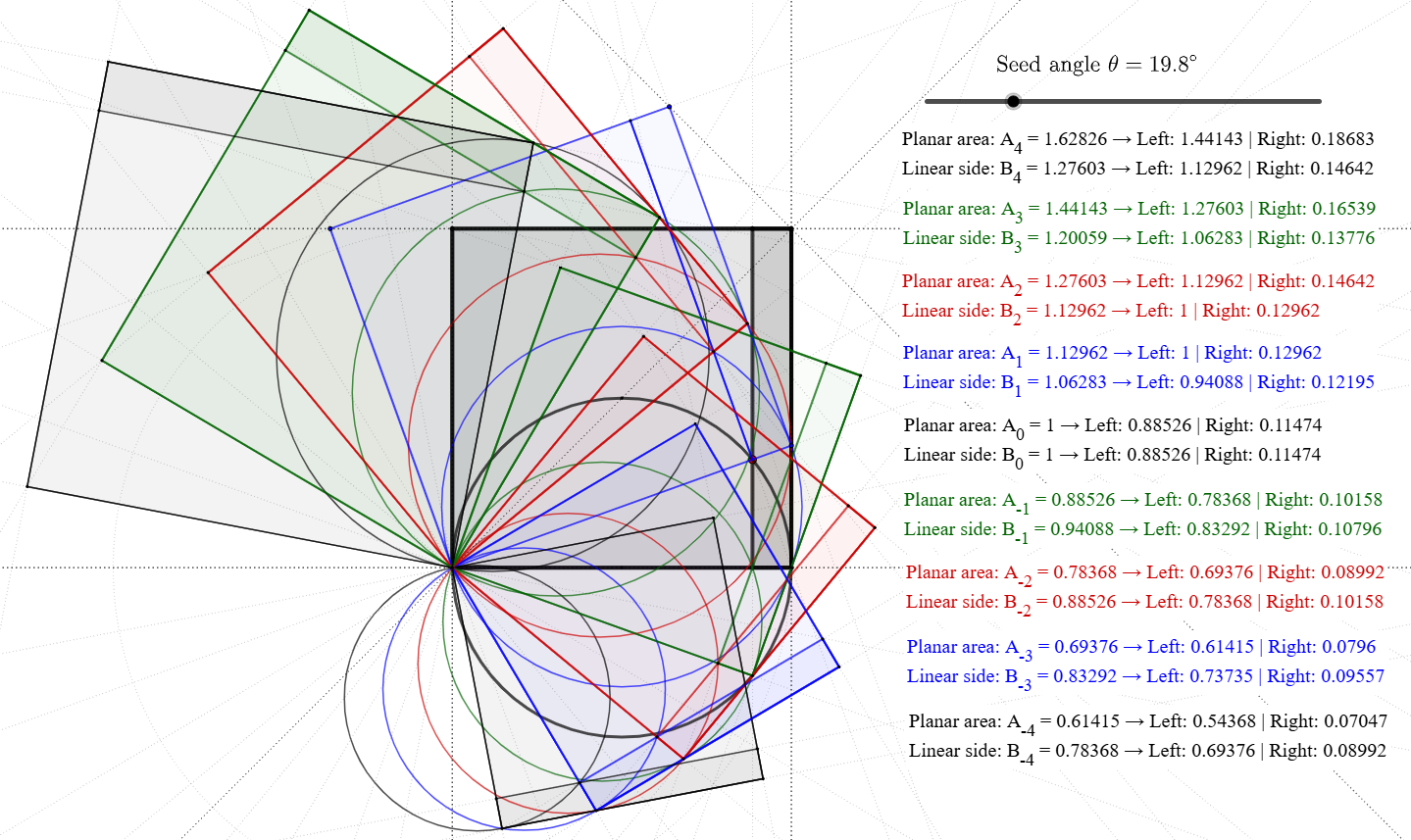} 
    \caption{Continuous angular variation of the multi-generational Power Spiral Map. Modulation of the parameter $\theta$ generates a recursive scaling lattice. Presented seed angle is randomly chosen. Interactive animation available at the \href{https://www.geogebra.org/m/bgk59tua}{GeoGebra Workspace}.}
    \label{fig:global_spiral}
\end{figure}

\textbf{Exploration.}
An interactive exploration reveals that the proportional replication of partitions provides an explicit spatial visualization of polynomial terms assembling along a single geometric sequence. By actively modulating the parameter $\theta$ within the dynamic construction, the reader can observe how the internal cutting lines physically aggregate individual algebraic powers across the coordinate field, as documented sequentially in Table~\ref{tab:polynomial_partitions}.

For a polynomial base defined by $x = \sec\theta$, the generations display perfect structural uniformity. In the first expansion generation $S_{1}$, the total area scales to $A_{1} = x^2$, leaving an invariant unit area block $A_{1L} = 1$ as its left partition, while its total base length scales to $B_{1} = x$ with a left linear partition segment $B_{1L} = x^{-1}$. Moving outward to $S_{2}$, the left rectangular cut scales precisely to the quadratic area variant $A_{2L}=x^2$ (leaving a right partition of $A_{2R}=x^4 - x^2$), while its base side partitions into segments of $B_{2L}=1$ and $B_{2R}=x^2 - 1$. This proportional behavior extends symmetrically down through the contracting domain ($n<0$), providing a clean geometric representation of successive powers through purely planar intersections.

\begin{proposition}[Global Scaling and Partition Laws]
Let the multi-generational scaling family of squares $S_n$ be parameterized by the base $x = \sec\theta$ for $n \in \mathbb{Z}$. The total dimensions and internal linear and planar partitions of any generation $S_n$ conform to a strict geometric sequence governed by the following operational invariants:
\begin{equation}
B_n = x^n, \quad B_{nL} = x^{n-2}, \quad B_{nR} = x^{n-2}(x^2 - 1)
\end{equation}
\begin{equation}
A_n = x^{2n}, \quad A_{nL} = x^{2n-2}, \quad A_{nR} = x^{2n-2}(x^2 - 1)
\end{equation}
\end{proposition}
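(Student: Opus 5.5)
The plan is induction on $n$, using a single similarity that carries each generation to the next. Expansion and contraction are both instances of one map: the spiral similarity $\Phi$ centred at $O$ that rotates by $\theta$ and scales by $\sec\theta = x$. The Expanding Invariants proposition shows that $\Phi$ sends the reference square $OIAB$ (side $OI=1$ along the $x$-axis) to $OEFG$ (side $OE=\sec\theta$ along the ray at angle $\theta$). The Reciprocal Joint Invariants proposition shows that $\Phi^{-1}$ sends $OIAB$ to $OSJK$ (side $\cos\theta = x^{-1}$). I will therefore define $S_n := \Phi^n(OIAB)$ for every $n\in\mathbb{Z}$, so that $S_0 = OIAB$, $S_1 = OEFG$ and $S_{-1} = OSJK$. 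Each generation's cutting line is the $\Phi$-image of the previous one. This is exactly the recursive cascade described in the text, since the cut $ISH$ in $S_1$ is the side $OI$-to-$I$ line mapped consistently, and $LM$ in $S_{-1}$ is the image of the cut $CD$ in $S_0$ under $\Phi^{-1}$.

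\textbf{Base case and inductive step.} The base case is generation $S_1$. By the Expanding Invariants proposition, $B_1 = OE = x$ and the left partition has width $B_{1L} = OS = \cos\theta = x^{-1}$, which matches $x^{n-2}$ at $n=1$. By subtraction, $B_{1R} = x - x^{-1} = x^{-1}(x^2-1)$. The areas follow: $A_1 = x^2$, $A_{1L} = OS\cdot OG = 1 = x^{0}$, and $A_{1R} = x^2-1$. For the inductive step I use two facts. First, $\Phi$ multiplies every length by $x$ and every area by $x^2$. Second, $\Phi$ maps the partition of $S_n$ onto the partition of $S_{n+1}$, because it maps the cutting line, which is parallel to the lateral edges, to the corresponding line. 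Hence $B_{n+1} = xB_n$, $B_{(n+1)L} = xB_{nL}$, and similarly for the areas with factor $x^2$. Induction in both directions from $n=1$ then gives all six formulas for every $n\in\mathbb{Z}$; the downward direction uses $\Phi^{-1}$, with factors $x^{-1}$ and $x^{-2}$. As a consistency check, the formulas at $n=0$ give $B_{0L} = x^{-2} = \cos^2\theta = OC$ and $A_{0L} = OC\cdot 1 = \cos^2\theta$, in agreement with the Pythagorean Partition Law. At $n=-1$ they give $B_{-1L} = \cos^3\theta = OL$ and $A_{-1L} = \cos^4\theta$, in agreement with the Reciprocal Joint Invariants proposition. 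The right-hand quantities always follow by subtraction: $B_{nR} = x^n - x^{n-2}$ and $A_{nR} = x^{2n} - x^{2n-2}$.

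\textbf{Main obstacle: the cutting lines.} The delicate step is justifying that the cutting lines actually form a single $\Phi$-orbit, in other words that the cut in $S_1$ (the line $ISH$) is the $\Phi$-image of the cut $CD$ in $S_0$. I would verify this directly. The map $\Phi$ sends $C = (\cos^2\theta, 0)$ to the point on ray $OE$ at distance $\cos^2\theta\cdot\sec\theta = \cos\theta$ from $O$, which is exactly $S$. It also sends the vertical direction of $CD$ to the direction perpendicular to $OE$, which is the direction of $IS$ by Thales. So $\Phi(CD)$ lies on the line $ISH$, and the same computation applied with $\Phi^{-1}$ produces $LM$ through $C$. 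Two points also need fixing for the area formulas to read correctly. The base side $B_n$ is taken to be the side of $S_n$ lying on the ray at angle $n\theta$. The area $A_{nL}$ is taken as $B_{nL}\cdot B_n$, which requires the cut to be perpendicular to the base side; this perpendicularity is preserved by $\Phi$ because a similarity preserves angles.
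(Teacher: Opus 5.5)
Your approach is the paper's argument made explicit. The paper posits a scaling operator that multiplies side lengths by $x$ and a constant ratio $B_{nL}/B_n=x^{-2}$, then gets $A_{nL}=B_nB_{nL}$ and the right parts by subtraction. You realise that operator as the spiral similarity $\Phi=\sec\theta\,R_\theta$ and actually check that it carries the cut $CD$ of $S_0$ onto the cut $ISH$ of $S_1$ (via $\Phi(C)=S$ and Thales), a step the paper only asserts. For your family $S_n:=\Phi^n(OIAB)$, all six formulas follow correctly.

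One claim is false, however: $\Phi^{-1}$ does not send $OIAB$ to $OSJK$.
\begin{itemize}
\item Because $S$ lies on the segment $OE$, the squares $OSJK$ and $OEFG$ have their bases on the same ray, at angle $\theta$.
\item In fact $OSJK=\Psi(OIAB)$ with $\Psi=\cos\theta\,R_\theta$. Check $\Psi(I)=S$ and $\Psi(B)=(-\sin\theta\cos\theta,\cos^2\theta)=K$, which is the point on the circle of diameter $OB$, as the paper requires.
\item By contrast, $\Phi^{-1}=\cos\theta\,R_{-\theta}$ sends $I$ to the reflection of $S$ in the $x$-axis.
\item Correspondingly, $\Phi^{-1}$ maps the line $CD$ to the mirror image of the line $LM$ in the $x$-axis, not to $LM$. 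That image passes through $C$ only because $LM$ itself does.
\end{itemize}
So $OSJK$, $OIAB$, $OEFG$ are not a single $\Phi$-orbit, and the identification $S_{-1}=OSJK$ in your setup is wrong.

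The repair is easy, because $\Psi=R_{2\theta}\circ\Phi^{-1}$ differs from $\Phi^{-1}$ only by a rotation about $O$. Run the downward induction with $\Psi$ instead. It maps $C$ to $L$ and the vertical direction to that of $SI$, hence $CD$ onto $LM$, and it still multiplies lengths by $x^{-1}$ and areas by $x^{-2}$. Equivalently, if the contracting cascade iterates the Section 2.3 step, its $m$-th square is $\Psi^m(S_0)=R_{2m\theta}\Phi^{-m}(S_0)$. This is congruent, partition included, to the square your definition produces, so every length and area, and hence every formula for $n<0$, is unchanged.
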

\begin{proof}
By construction, each successive generation $S_n$ is mapped via a recursive scaling operator that increments the total side length by a linear factor of $x = \sec\theta$, yielding $B_n = x^n B_0$. Since the unit reference square $S_0$ possesses a baseline side length of $B_0 = 1$, it follows that $B_n = x^n$, and its total planar area scales quadratically as $A_n = B_n^2 = x^{2n}$. 

Preserving a self-similar structural partition across the global network requires the linear partition ratio to remain constant across all generations, such that $B_{nL}/B_n = x^{-2} = \cos^2\theta$. This fixes the left linear boundary segment at $B_{nL} = x^{n-2}$ for any integer generation $n$. Because the left internal partition $A_{nL}$ forms a rectangular domain spanning the full height of its host square ($B_n$) across a width of $B_{nL}$, its planar area is given by the cross-product:
\begin{equation}
A_{nL} = B_n \times B_{nL} = x^n \times x^{n-2} = x^{2n-2}
\end{equation}
The complementary right linear and planar partitions $B_{nR}$ and $A_{nR}$ follow naturally by direct algebraic subtraction, preserving a perfectly uniform progression of even-power area partitions across the entire infinite domain of the spiral.
\end{proof}

\begin{table}[H]
    \centering
    \renewcommand{\arraystretch}{1.4}
    \setlength{\tabcolsep}{4pt}
    \begin{tabular}{l c c c c c c}
        \toprule
        & \multicolumn{3}{c}{\textbf{Planar Area ($A$)}} & \multicolumn{3}{c}{\textbf{Linear Base ($B$)}} \\
        \cmidrule(r){2-4} \cmidrule(l){5-7}
        \shortstack{\textbf{Gen.} \\ ($S_n$)} & 
        \shortstack{\textbf{Total} \\ \textbf{Area ($A_n$)}} & 
        \shortstack{\textbf{Left} \\ \textbf{Part ($A_{nL}$)}} & 
        \shortstack{\textbf{Right} \\ \textbf{Part ($A_{nR}$)}} & 
        \shortstack{\textbf{Total} \\ \textbf{Side ($B_n$)}} & 
        \shortstack{\textbf{Left} \\ \textbf{Part ($B_{nL}$)}} & 
        \shortstack{\textbf{Right} \\ \textbf{Part ($B_{nR}$)}} \\
        \midrule
        $S_{4}$  & $x^8$ & $x^6$ & $x^8 - x^6$ & $x^4$ & $x^2$ & $x^4 - x^2$ \\
        $S_{3}$  & $x^6$ & $x^4$ & $x^6 - x^4$ & $x^3$ & $x$   & $x^3 - x$   \\
        $S_{2}$  & $x^4$ & $x^2$ & $x^4 - x^2$ & $x^2$ & $1$   & $x^2 - 1$   \\
        $S_{1}$  & $x^2$ & $1$   & $x^2 - 1$   & $x$   & $x^{-1}$ & $x - x^{-1}$ \\
        $S_0$    & $1$   & $x^{-2}$ & $1 - x^{-2}$ & $1$   & $x^{-2}$ & $1 - x^{-2}$ \\
        $S_{-1}$ & $x^{-2}$ & $x^{-4}$ & $x^{-2} - x^{-4}$ & $x^{-1}$ & $x^{-3}$ & $x^{-1} - x^{-3}$ \\
        $S_{-2}$ & $x^{-4}$ & $x^{-6}$ & $x^{-4} - x^{-6}$ & $x^{-2}$ & $x^{-4}$ & $x^{-2} - x^{-4}$ \\
        $S_{-3}$ & $x^{-6}$ & $x^{-8}$ & $x^{-6} - x^{-8}$ & $x^{-3}$ & $x^{-5}$ & $x^{-3} - x^{-5}$ \\
        $S_{-4}$ & $x^{-8}$ & $x^{-10}$ & $x^{-8} - x^{-10}$ & $x^{-4}$ & $x^{-6}$ & $x^{-4} - x^{-6}$ \\
        \bottomrule
    \end{tabular}
    \caption{Algebraic distribution and structural partitions across a 9-generation scaling family parameterized by the base $x = \sec\theta$. Index values $n > 0$ denote expanding outer configurations, $n < 0$ denote contracting inner configurations, and $S_0$ isolates the unit reference square. This structural alignment illustrates the simultaneous tracking of higher-order 2D area partitions and 1D linear boundaries via continuous rotation.}
    \label{tab:polynomial_partitions}
\end{table}

\section{Geometric Realization: Deriving the Golden Ratio and Plastic Ratio}

While classical static spirals, such as the Spiral of Theodorus or the logarithmic \textit{Spira Mirabilis}, primarily trace recursive progressions of linear lengths, the Power Spiral Map correlates 1D side lengths with 2D area partitions (as detailed in Table~\ref{tab:polynomial_partitions}). Continuous variation of the angular parameter $\theta$ systematically varies the area and length ratios across co-evolving generations. 

The system's geometry is defined by the association of two distinct scaling regimes originating at the origin $O$. While the contracting system ($n < 0$) yields successive generations $S_{-m}$ (for $m = -n$) with total side lengths defined by $B_{-m} = \cos^m\theta$, the alignment is physically established via the expanding system. This configuration is achieved constructively using a circular tracking arc centered at $O$ with a radius defined by the expanding length $R = \sec^m\theta$. The compass sweeps this expanding metric across the plane to compare it directly against the framework's primary orthogonal boundaries, specifically the base ray of the generation $1$ square and the upper edge of the generation $-1$ square.

An alignment configuration is isolated when this tracking arc structurally coincides with these specific boundaries, producing the remarkable simultaneous coordinate intersections described in the Figure~\ref{fig:geometric_alignments} legends. Because the expanding and contracting domains are strictly proportional inverses, this physical coincidence of the expanding arc mathematically forces the corresponding contracting side length $B_{-m}$ to precisely equal the vertical projection length of the seed ray ($\tan\theta$). This fundamental geometric balance yields the tracking condition:
\begin{equation}
\cos^m\theta = \tan\theta = \frac{\sin\theta}{\cos\theta} \implies \cos^{m+1}\theta = \sin\theta
\end{equation}

Squaring both sides expresses the relationship entirely in terms of the fundamental circular functions:
\begin{equation}
\cos^{2m+2}\theta = \sin^2\theta = 1 - \cos^2\theta
\end{equation}

Let $A_1 = x^2 = \sec^2\theta$ represent the area of the first expanding square $S_1$, meaning its reciprocal value maps to the shrinking area metric $\frac{1}{A_1} = \cos^2\theta$. Substituting this metric directly into our angular relation yields $\left(\frac{1}{A_1}\right)^{m+1} + \frac{1}{A_1} - 1 = 0$. Multiplying the entire equation by $-A_1^{m+1}$ yields a clean family of polynomial identities mapping to specific alignment configurations:
\begin{equation}
A_1^{m+1} - A_1^m - 1 = 0
\end{equation}

\renewcommand{\textfraction}{0.05}
\renewcommand{\topfraction}{0.95}
\renewcommand{\floatpagefraction}{0.75}

\begin{figure}[!htbp]
    \centering
    \begin{subfigure}{\textwidth}
        \centering
        \includegraphics[width=0.55\textwidth]{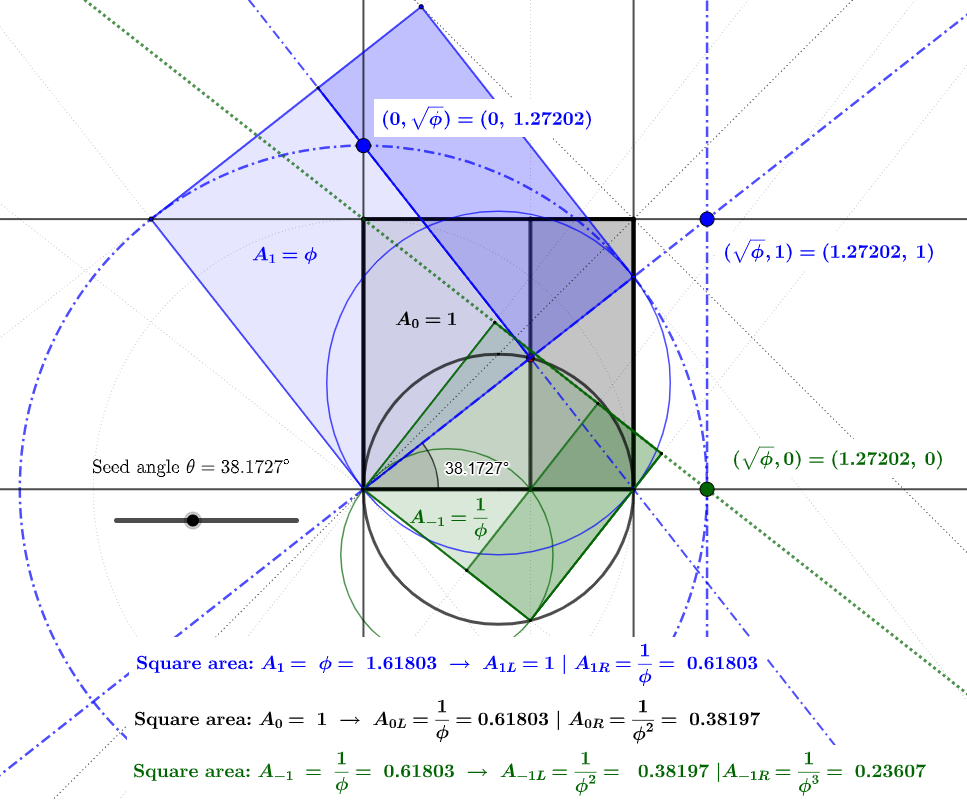}
        \caption{Golden Ratio alignment ($m=1$).}
        \label{fig:align_golden}
    \end{subfigure}

    \nopagebreak\vspace{0.5cm}

    \begin{subfigure}{\textwidth}
        \centering
        \includegraphics[width=0.95\textwidth]{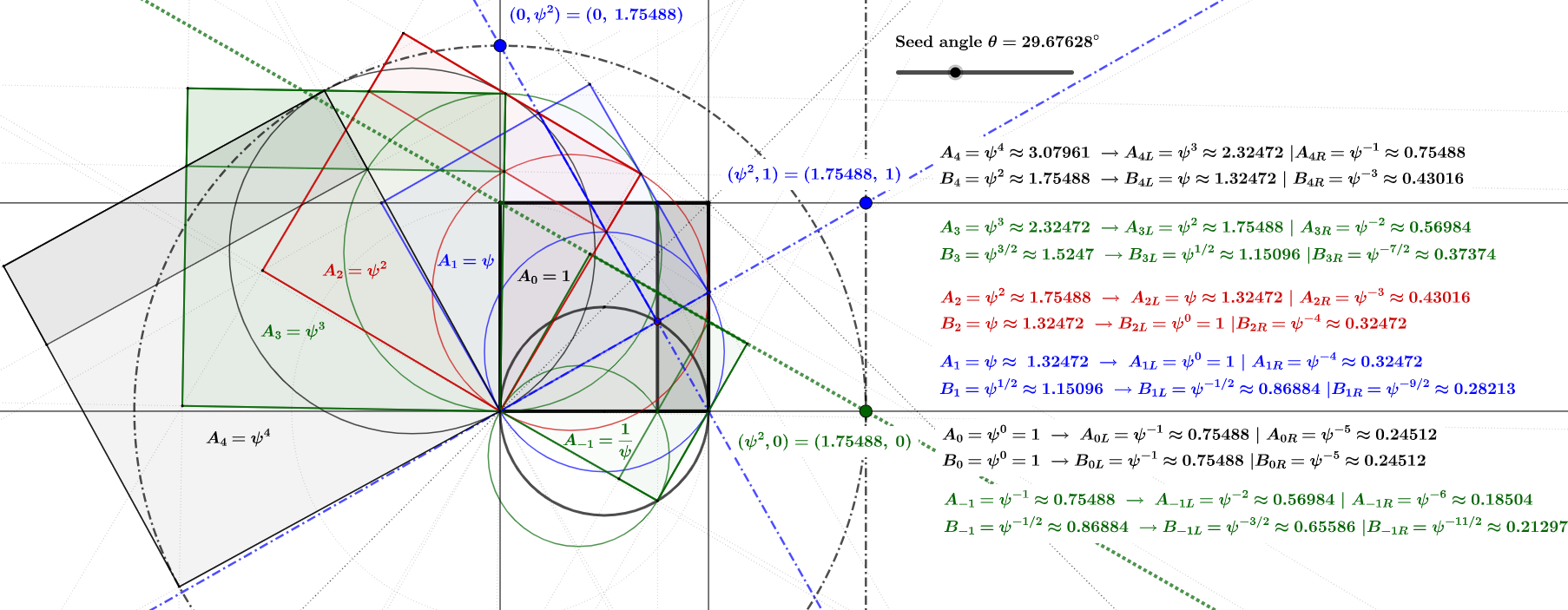}
        \caption{Plastic Ratio alignment ($m=4$).}
        \label{fig:align_plastic}
    \end{subfigure}
    \caption{Geometric realization of alignments. (a) Seed angle $\theta \approx 38.1727^\circ$ corresponds to the golden ratio alignment $\phi$ via three boundary intersections; see \href{https://www.geogebra.org/m/us3nd5zf}{Figure (a) and GeoGebra construction}. (b) Angle $\theta \approx 29.6763^\circ$ isolates a configuration that solves the polynomial for the plastic ratio $A_1 = \psi \approx 1.32472$ (where the second-generation area $A_2 = A_1^2 = \psi^2 \approx 1.75488$); see \href{https://www.geogebra.org/m/dqeqfgmg}{Figure (b) and GeoGebra construction}.}
    \label{fig:geometric_alignments}
\end{figure}

This identity reveals specific algebraic constants, mapped explicitly across the two distinct states of our dual-panel visualization:

\begin{itemize}
    \item \textbf{The Golden Ratio ($\phi$) [$m=1$]:} At the primary alignment configuration ($\theta \approx 38.17^\circ$), the equation resolves to the quadratic $A_1^2 - A_1 - 1 = 0$. As shown in Figure~\ref{fig:align_golden}, this state produces three simultaneous coordinate intersections along the boundaries: the horizontal base extension at $(\sqrt{\phi}, 0)$, the vertical projection coincidence at $(\sqrt{\phi}, 1)$, and the vertical axis intercept at $(0, \sqrt{\phi})$.
    \item \textbf{The Plastic Ratio ($\psi$) [$m=4$]:} At the higher-order alignment configuration, the required seed angle is $\theta \approx 29.6763^\circ$. Here, the system satisfies the fifth-order polynomial $A_1^5 - A_1^4 - 1 = 0$, which factors into $(A_1^3 - A_1 - 1)(A_1^2 + A_1 + 1) = 0$. The real component yields the cubic relation $A_1^3 - A_1 - 1 = 0$, isolating the Plastic Ratio ($\psi \approx 1.32472$) governing the Padovan and Perrin series. As shown in Figure~\ref{fig:align_plastic}, this reveals a sequential bridge where unit blocks of area 1 occur between expanding exponent tracks. This configuration produces a distinct triad of coordinate coincidences on the grid lines at the square of the ratio: the horizontal extension $(\psi^2, 0)$, the vertical projection coincidence $(\psi^2, 1)$, and the vertical axis intercept $(0, \psi^2)$.
\end{itemize}

This construction suggests a broader family of geometric alignments associated with the polynomial $A^{m+1}-A^{m}-1=0$, demonstrating that certain intersection configurations correspond directly to the roots of polynomial identities via the continuous rotation of a singular, planar parameter.

\section{Discussion and Historical Precedents}

The Power Spiral Map relates to a history of geometric construction that extends from 14th-century modular partitions to early modern continuous instrumentation. This recursive scaling lattice shares structural themes with historical geometric progressions, such as those preserved in the 16th-century manuscript compendium \textit{Persan 169}~\cite{BnFPersan169}. Originally held in the collection of the 17th-century polymath Melchisédech Thévenot and drawing upon the 14th-century scaling methods of Ab\={u} Bakr al-\d{H}al\={\i}l, this artifact illustrates a discrete logarithmic spiral formed by connected right triangles (Figure~\ref{fig:historical_precedents}a). 

While the classical Spiral of Theodorus grows arithmetically with side lengths scaling as $\sqrt{n}$, the \textit{Persan 169} construction utilizes a strict geometric progression where the values inside the radical scale by a constant factor $k = \frac{4}{3}$:
\begin{equation}
\dots \rightarrow \sqrt{72} \rightarrow \sqrt{96} \rightarrow \sqrt{128} \rightarrow \sqrt{\frac{512}{3}} \rightarrow \dots
\end{equation}
Consequently, successive spatial vector lengths scale exponentially by $\sqrt{k}$, mapping the power of the radius directly to the angular step. By transitioning our framework from a parametric linear lattice to a continuous spiral, this model situates itself within the long-term evolution of these early geometric transformations, bridging the discrete scaling of early treatises with the mechanical linkage systems of Descartes and Van Schooten~\cite{vanschooten1646} and the projective coordinate mappings of d'Ocagne's nomography~\cite{ocagne1899}.

Similar recursive partition structures appear in the independent geometric models of Nguyen Tan Tai~\cite{nguyen2012structure}. His self-similar partition cascades driven by an angular parameter (reproduced in Figure~\ref{fig:historical_precedents}b) explored how localized area-preserving partitions can propagate outward to form recursive, space-filling scaling networks.

\begin{figure}[t!]
    \centering
    \begin{subfigure}{0.48\textwidth}
        \centering
        \includegraphics[width=\textwidth]{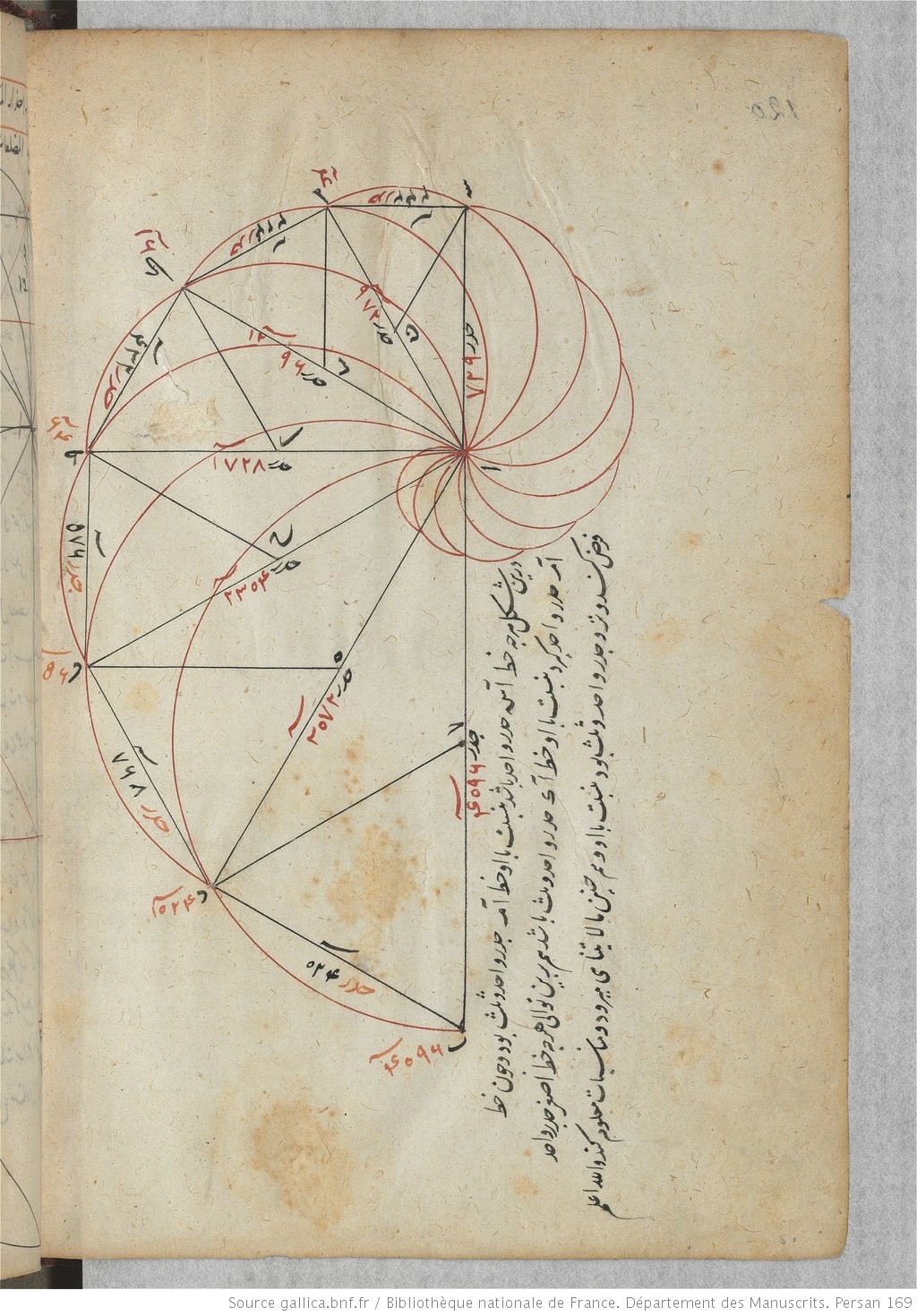}
        \caption{Persian logarithmic radicand spiral (BnF Persan 169).}
        \label{fig:persian_spiral}
    \end{subfigure}
    \hfill
    \begin{subfigure}{0.48\textwidth}
        \centering
        \includegraphics[width=\textwidth]{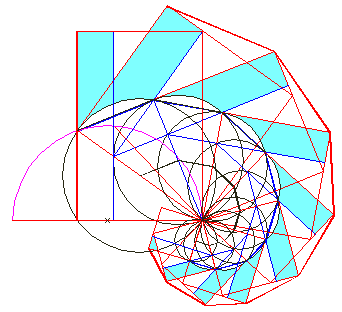}
        \caption{Nguyen Tan Tai's Fig-8-1 framework.}
        \label{fig:nguyen_framework}
    \end{subfigure}
    \caption{Historical precursors to the geometric locus framework. (a) Folio 120v of the 16th-century Persian manuscript illustrating a recursive geometric progression under the radical with a constant scaling factor of $\frac{4}{3}$ (Source: Bibliothèque nationale de France). (b) The spatial element spiral from the digital archives of Nguyen Tan Tai. Driven by the angle $\alpha$, the construction illustrates a self-similar partition cascade across circles, squares, and rectangles~\cite{nguyen2012structure}.}
    \label{fig:historical_precedents}
\end{figure}

By linking these discrete, area-preserving invariants with continuous angular rotation, the Power Spiral Map complements this structural lineage. It demonstrates that certain intersection configurations correspond directly to the roots of polynomial identities within a planar framework, providing an intuitive visual link connecting classical synthetic geometry with algebraic identities.

\bibliographystyle{unsrt} 
\bibliography{references}

\end{document}